\newtheoremstyle{nonum}{}{}{\itshape}{}{\bfseries}{.}{ }{\thmnote{#3}}
\newtheorem{theorem}{Theorem}
\newtheorem{lemma}[theorem]{Lemma}
\def\eps{{\varepsilon}}
\def\conv{{\rm conv}}
\title{A very short proof of the Figiel-Lindenstrauss-Milman theorem}
\author{Tomer Milo}
\date{}
\begin{document}

\maketitle

\noindent The goal of this note is to provide a new proof for the following theorem, originally proved in \cite{FLM} by Figiel, Lindenstrauss and Milman:

\begin{theorem}\label{main theorem} There exists an absolute constant $c>0$ such that for every $n \in \mathbb{N}$ and for every convex polytope $P \subset \mathbb{R}^{n}$ which is symmetric about the origin (that is, $P= -P$) we have:
\[ \log |V| \cdot \log|\mathcal{F}| \geq cn  \]
Where $V$ and $\mathcal{F}$ are the sets of vertices ($0$-dimensional faces) and facets ($(n-1)$-dimensional faces) of $P$, respectively. Moreover, the same estimate holds for a general polytope $P$ satisfying \newline $B_n \subset P \subset \sqrt{n}B_n$, where $B_n$ is the Euclidean unit ball in $\mathbb{R}^n$, and one can actually take $c= c_n = \frac{1}{9} + o(1)$ as $n \to \infty$, where $\log$ is the standard logarithm to base $e$.
\end{theorem}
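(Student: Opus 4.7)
The plan is to estimate the Gaussian mean width $w_G(K) := \mathbb{E}\sup_{x \in K}\langle g, x\rangle$ (with $g$ a standard Gaussian vector in $\mathbb{R}^n$) of both $P$ and its polar $P^\circ$, and then to combine two ingredients: an \emph{upper} bound on these widths in terms of $|V|$ and $|\mathcal{F}|$, and a \emph{universal} lower bound $w_G(P)\cdot w_G(P^\circ) \gtrsim n$. By John's theorem for symmetric convex bodies, the symmetric case of the theorem reduces to the normalized case $B_n \subset P \subset \sqrt{n}\,B_n$; since linear changes of coordinates preserve both $|V|$ and $|\mathcal{F}|$, I may assume this normalization throughout (which also handles the ``moreover'' clause of the theorem).

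Under the normalization $B_n \subset P \subset \sqrt{n}\,B_n$ every vertex $v$ of $P$ satisfies $|v|\leq \sqrt{n}$; and writing the facet presentation as $P = \bigcap_{i=1}^{|\mathcal{F}|}\{x : \langle a_i, x\rangle \leq 1\}$ forces $|a_i| \leq 1$ (because $B_n \subset P$), so $P^\circ = \conv(a_1, \ldots, a_{|\mathcal{F}|})$ is the convex hull of $|\mathcal{F}|$ vectors of norm $\leq 1$. The supremum of a linear functional on a polytope is attained at a vertex, and for $k$ centered Gaussians of variance $\leq \sigma^2$ the expected maximum is at most $\sigma\sqrt{2\log k}$; applying this to $P$ and to $P^\circ$ gives
\[
w_G(P) \leq C\sqrt{n \log|V|}, \qquad w_G(P^\circ) \leq C\sqrt{\log|\mathcal{F}|}.
\]

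For the matching lower bound I would use the elementary pointwise inequality $\|x\|_K \cdot \|x\|_{K^\circ} \geq |x|^2$, valid for any bounded convex body $K$ with $0$ in its interior (seen instantly by noting that $x/\|x\|_K \in K$, so $\|x\|_{K^\circ} \geq \langle x, x/\|x\|_K\rangle$). Applied with $x = g$, taking square roots, and using Cauchy--Schwarz together with $\mathbb{E}\|g\|_K = w_G(K^\circ)$ and $\mathbb{E}\|g\|_{K^\circ} = w_G(K)$, this yields
\[
w_G(K)\cdot w_G(K^\circ) \;\geq\; \bigl(\mathbb{E}|g|\bigr)^2 \;\geq\; cn.
\]
Combining the two estimates gives $C^2\sqrt{n \log|V|\cdot\log|\mathcal{F}|} \geq cn$, i.e., $\log|V| \cdot \log|\mathcal{F}| \geq c'n$, which is the theorem. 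I do not anticipate a genuine obstacle here: the whole argument is just John's theorem, a Gaussian maximum estimate, and one application of Cauchy--Schwarz. The only point deserving brief verification is that the lower bound $w_G(K)w_G(K^\circ) \gtrsim n$ never uses symmetry of $K$ — and it does not, so the same proof covers the non-symmetric case of the theorem without modification.
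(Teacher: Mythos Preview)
Your proof is correct and is essentially the paper's argument translated from spherical to Gaussian language: the sub-Gaussian maximum bound $\mathbb{E}\max_i X_i \leq \sigma\sqrt{2\log k}$ plays the role of the paper's Lemma~\ref{lemma: mean width bound} (proved there via spherical concentration plus a union bound), and your Cauchy--Schwarz lower bound $w_G(K)\,w_G(K^\circ) \geq (\mathbb{E}|g|)^2$ is exactly the paper's ``trivial bound'' $M(P)M^*(P) \geq 1$ after rescaling by $(\mathbb{E}|g|)^2 \sim n$. The two arguments correspond line by line.
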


The fact that the theorem holds for general polytopes $P$ satisfying $B_n \subset P \subset \sqrt{n}B_n$ implies that it also holds for every origin-symmetric polytope; this is a consequence of John's theorem, which we state now. A good exposition on John's theorem can be found in \cite[Section 2]{AGA}.

\begin{theorem}\label{theorem: john}(John)
Let $K \subset \mathbb{R}^n$ be such that $K = - K$. There exists a linear transformation $T$ such that $B_n \subset T(K) \subset \sqrt{n}B_n$.
\end{theorem}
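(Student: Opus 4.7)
The plan is to apply the classical John ellipsoid method. First I would show that among all centered ellipsoids contained in $K$ there exists one of maximal volume, call it $\mathcal{E}$. Existence follows by parameterizing centered ellipsoids as $A\,B_n$ for positive definite symmetric matrices $A$ and using compactness plus continuity of $\det A$; the family is nonempty because $K$ has nonempty interior. The symmetry $K = -K$ lets one always take a maximizer centered (e.g.\ by averaging $\mathcal{E}$ with $-\mathcal{E}$ and invoking Brunn--Minkowski). Let $T$ be the linear map sending $\mathcal{E}$ to $B_n$; then $B_n \subseteq T(K)$, the image $T(K)$ is still symmetric, and it remains to prove $T(K) \subseteq \sqrt{n}\, B_n$.

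The core step is John's characterization of the extremal ellipsoid: there exist \emph{contact points} $x_1, \ldots, x_m \in \partial B_n \cap \partial T(K)$ and positive weights $c_1, \ldots, c_m$ with
\[ \sum_{i=1}^m c_i\, x_i \otimes x_i \;=\; \mathrm{Id}. \]
To establish this, I would consider perturbations $A_\varepsilon = \mathrm{Id} + \varepsilon H$ of the identity: volume grows to first order by $\varepsilon\,\mathrm{tr}(H)$, while the support function of $A_\varepsilon(B_n)$ at a contact point $x_i$ grows by $\varepsilon\langle Hx_i, x_i\rangle$. Maximality of $B_n$ therefore rules out any symmetric $H$ with $\mathrm{tr}(H) > 0$ and $\langle Hx_i, x_i\rangle \leq 0$ for every $i$, and a Farkas-type separation in the Euclidean space of symmetric matrices (with Frobenius inner product) translates this non-existence into the displayed decomposition.

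Given the decomposition, taking traces yields $\sum_i c_i = \mathrm{tr}(\mathrm{Id}) = n$. Since each $x_i$ is a contact point and $B_n \subseteq T(K)$, the tangent hyperplane $\{\langle x_i, \cdot\rangle = 1\}$ supports $T(K)$ at $x_i$; hence $\langle x_i, y\rangle \leq 1$ for every $y \in T(K)$, and the symmetry $T(K) = -T(K)$ upgrades this to $|\langle x_i, y\rangle| \leq 1$. Consequently, for any $y \in T(K)$,
\[ |y|^2 \;=\; \langle y,\, \mathrm{Id}\, y\rangle \;=\; \sum_i c_i\, \langle x_i, y\rangle^2 \;\leq\; \sum_i c_i \;=\; n, \]
which gives $T(K) \subseteq \sqrt{n}\, B_n$.

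The main obstacle is the perturbation/separation step producing John's decomposition. The subtlety is that the first-order analysis must be promoted to an actual admissible volume-increasing ellipsoid: one needs to control the $O(\varepsilon^2)$ errors (typically by perturbing $H$ slightly so that all contact-point inequalities become strict, and then using continuity of the support function of $T(K)$ away from the contact set), and then invoke the correct form of convex duality to conclude from the absence of an improving direction that $\mathrm{Id}$ lies in the positive cone generated by $\{x_i \otimes x_i\}$.
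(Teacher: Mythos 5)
The paper does not actually prove this statement: John's theorem is imported as a known result, with a pointer to \cite[Section 2]{AGA}, and only its consequence (reducing the symmetric case of Theorem 1 to the case $B_n \subset P \subset \sqrt{n}B_n$) is used. Your proposal is essentially the classical proof given in that reference: maximal-volume centered ellipsoid, John's decomposition $\sum_i c_i\, x_i \otimes x_i = \mathrm{Id}$ at contact points, $\sum_i c_i = n$ by taking traces, and then $|y|^2 = \sum_i c_i \langle x_i, y\rangle^2 \le n$ for $y \in T(K)$, where the symmetry of $T(K)$ is what upgrades $\langle x_i, y\rangle \le 1$ to $|\langle x_i, y\rangle| \le 1$ and yields the factor $\sqrt{n}$ rather than $n$. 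The outline is correct. Two minor remarks: the statement implicitly requires $K$ to be a convex body (compact with nonempty interior), which your existence argument tacitly assumes; and the Brunn--Minkowski averaging remark is superfluous, since you maximize over centered ellipsoids from the outset (and in any case $\tfrac{1}{2}\mathcal{E} + \tfrac{1}{2}(-\mathcal{E})$ is exactly the central translate of $\mathcal{E}$, with the same volume). The decomposition step, which you rightly identify as the crux, is only sketched, but the sketch --- separate $\mathrm{Id}$ from the closed cone generated by $\{x \otimes x\}$ over the compact contact set, replace $H$ by $H - \delta\,\mathrm{Id}$ to make the contact-point inequalities strict, use the uniform strict inclusion of $A_\varepsilon(B_n)$ in $T(K)$ away from the contact set to absorb the $O(\varepsilon^2)$ errors, and finish with Carath\'eodory to extract finitely many $x_i$ --- is the standard and correct way to complete it.
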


Indeed, for an origin symmetric polytope $P$ we may apply an appropriate linear transformation $T$ so that $B_n \subset T(P) \subset \sqrt{n}B_n$. Since $T$ does not change the combinatorial structure of $P$, the symmetric part of Theorem \ref{main theorem} will follow.
\\
Theorem \ref{main theorem} is classically proved as a consequence of Dvoretzky's theorem; see \cite[section 5]{AGA} for an exposition of Milman's classical proof and its applications. Here we provide a new proof avoiding Dvoretzky's theorem completely (as well as the Dvoretzky-Rogers lemma), and also avoiding the classical concentration inequality for Lipschitz functions on the Euclidean sphere $S^{n-1} \subset \mathbb{R}^n$, which is proved via the spherical isoperimetric inequality due to Levy. We shall only need the following simple concentration result, for which an elementary proof can be found in \cite[Theorem 3.1.5]{AGA}:

\begin{lemma}\label{concentration of measure} Let $\sigma$ denote the unique rotational invariant probability measure on the Euclidean unit sphere $S^{n-1}$. For every $u \in S^{n-1}$ and every $\eps \in (0,1)$,

\[ \sigma(\{\theta \in S^{n-1}: \langle \theta, u \rangle < \eps \}) > 1 - e^{-\frac{n\eps^2}{2}}. \]

\end{lemma}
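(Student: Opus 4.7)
The plan is to work directly with the surface measure and reduce to an elementary one-dimensional integral. By the rotational invariance of $\sigma$ I may assume $u = e_n$, which turns the set in question into the complement of the spherical cap $C_\eps := \{\theta \in S^{n-1} : \theta_n \geq \eps\}$, so it suffices to show $\sigma(C_\eps) < e^{-n\eps^2/2}$. Slicing $S^{n-1}$ by the hyperplanes $\{\theta_n = t\}$ and noting that each slice is an isometric copy of $\sqrt{1-t^2}\, S^{n-2}$, one obtains
\[
\sigma(C_\eps) \;=\; \frac{\displaystyle\int_\eps^1 (1-t^2)^{(n-3)/2}\,dt}{\displaystyle\int_{-1}^1 (1-t^2)^{(n-3)/2}\,dt}.
\]

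I would then bound the two integrals separately. For the numerator, the substitution $u = 1-t^2$ together with the estimate $\sqrt{1-u} \geq \eps$ on the resulting range gives the closed-form bound $(1-\eps^2)^{(n-1)/2}/((n-1)\eps)$, which is at most $e^{-(n-1)\eps^2/2}/((n-1)\eps)$ via the standard inequality $(1-x)^{(n-1)/2}\leq e^{-(n-1)x/2}$. For the denominator, restricting the integration to $|t|\leq 1/\sqrt{n}$, where the integrand is bounded below by an absolute constant, produces a lower bound of order $1/\sqrt{n}$. Combining the two yields an estimate of the form $\sigma(C_\eps) \leq C\sqrt{n}\, e^{-(n-1)\eps^2/2}/((n-1)\eps)$ for some absolute $C>0$.

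The main obstacle, routine in nature, is to show that this estimate is dominated by $e^{-n\eps^2/2}$ uniformly in $\eps \in (0,1)$ and $n \in \mathbb{N}$, which requires absorbing the polynomial prefactor and the small gap between $(n-1)$ and $n$ in the exponent into the available slack. The awkward regime is very small $\eps$, but there $1 - e^{-n\eps^2/2}$ is close to $0$ and the claim is nearly vacuous (a trivial estimate such as $\sigma(C_\eps) \le 1/2$ already suffices). If the bookkeeping becomes tedious, a cleaner alternative is to use the Gaussian realization $\theta \stackrel{d}{=} g/\|g\|$ with $g \sim N(0, I_n)$ and apply Chernoff's inequality to the equivalent event $(1-\eps^2)g_n^2 \geq \eps^2 \sum_{i<n} g_i^2$; the moment generating functions of $\chi^2_1$ random variables then deliver the desired exponent after a one-parameter optimization.
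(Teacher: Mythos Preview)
The paper does not prove this lemma; it merely cites \cite[Theorem~3.1.5]{AGA} for an elementary proof. Your sketch \emph{is} that standard elementary argument --- reduce by rotational invariance to a cap about $e_n$, write its $\sigma$-measure as a ratio of integrals of $(1-t^2)^{(n-3)/2}$, and bound numerator and denominator separately --- so you are essentially reproducing what the paper defers to the reference.

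One honest caution about the step you label ``routine'': with the crude lower bound on the denominator your estimate reads $\sigma(C_\eps)\le \tfrac{C\sqrt n}{(n-1)\eps}\,e^{-(n-1)\eps^2/2}$, while the trivial bound $\sigma(C_\eps)<\tfrac12$ only handles $\eps\le\sqrt{2\ln 2/n}$. Whether these two regimes actually cover all of $(0,1)$ depends on the numerical value of $C$, and with the bounds you sketch (denominator $\gtrsim 2e^{-1/2}/\sqrt n$) it comes out borderline rather than comfortably overlapping. To obtain the inequality \emph{exactly} as stated --- exponent $n\eps^2/2$ and no constant prefactor --- one has to be a little sharper, e.g.\ evaluate the denominator via the Beta function, or compare the two integrals directly rather than bounding them separately. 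Your Gaussian/Chernoff alternative sidesteps this cleanly. For the application in the paper any bound of the form $Ce^{-cn\eps^2}$ would suffice, so the issue is cosmetic rather than substantive.
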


We prove Theorem \ref{main theorem} for a general polytope $ P = \conv(V) \subset \mathbb{R}^n$  satisfying $B_n \subset P \subset \sqrt{n}B_n$. Throughout the proof, the symbols $c,C,c_1$ etc. will denote positive absolute constants whose value may be different in different lines. Let $\|x\|_P = \inf\{t: x \in tP\}$ denote the gauge function of $P$, and let $h_{P}(x) := \max_{y\in P} \langle x, y\rangle = \max_{v \in V} \langle x, v \rangle $. We introduce the following two parameters:
\[ M(P) = \int_{S^{n-1}} \|\theta\|_{P}d\sigma(\theta),  \quad  M^{*}(P) = \int_{S^{n-1}}h_{P}(\theta)d\sigma \]
and note the simple relationship $M^*(P) = M(P^{\circ})$, where $P^{\circ} := \{ x \in \mathbb{R}^n: \max_{y \in P} \langle x , y \rangle \leq 1 \}$ is the dual body of $P$. Our main ingredient of the proof is the following well known bound on $M^*(P)$ in terms of the vertices of $P$.

\begin{lemma}\label{lemma: mean width bound}
    Let $P \subset RB_n$ be a polytope with vertex set $V$ such that $\log |V| < \frac{n}{3}$. Then
    \[ M^*(P) := \int_{S^{n-1}} h_{P}d\sigma \leq CR \sqrt{\frac{\log |V|}{n}} \]
    Where $C = C_n = \sqrt{3} + o(1)$ as $n \to \infty$.
\end{lemma}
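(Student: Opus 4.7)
The plan is to bound the support function $h_P$ pointwise via a sub-Gaussian tail estimate plus a union bound, and then integrate. First I will observe that
\[ h_P(\theta) = \max_{v \in V} \langle \theta, v \rangle \le \max_{v \in V} |\langle \theta, v \rangle| \]
pointwise on $S^{n-1}$, so it suffices to control the expected maximum of the $|V|$ random variables $|\langle \theta, v \rangle|$ under $\sigma$. For each fixed $v \in V \setminus \{0\}$, applying Lemma \ref{concentration of measure} to the unit vector $v/|v|$ and using $|v| \le R$ (since $P \subset RB_n$) yields
\[ \sigma\bigl(\{\theta : |\langle \theta, v\rangle| \ge t\}\bigr) \le 2 e^{-nt^2/(2R^2)} \quad \text{for all } t > 0, \]
the factor $2$ arising from the two-sided tail (apply the lemma once to $v/|v|$ and once to $-v/|v|$). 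A union bound over the $|V|$ vertices then gives
\[ \sigma\bigl(\max_{v \in V} |\langle\theta, v\rangle| \ge t\bigr) \le 2 |V| e^{-nt^2/(2R^2)}. \]

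Next I will integrate by the layer-cake formula, splitting at the threshold $t_0 := R\sqrt{2\log(2|V|)/n}$, which is precisely the level at which the union bound equals $1$. The hypothesis $\log|V| < n/4$ guarantees $t_0 < R$ so that the splitting is consistent with the a priori bound $\max_v |\langle \theta, v\rangle| \le R$:
\[ M^*(P) \le \int_0^R \sigma\bigl(\max_v |\langle \theta, v\rangle| \ge t\bigr)\, dt \le t_0 + 2|V| \int_{t_0}^{\infty} e^{-nt^2/(2R^2)}\, dt. \]
The remaining Gaussian tail is handled by the standard Mills-ratio bound $\int_{t_0}^\infty e^{-nt^2/(2R^2)}\,dt \le \frac{R^2}{nt_0}\,e^{-nt_0^2/(2R^2)}$, and the choice of $t_0$ makes $e^{-nt_0^2/(2R^2)} = 1/(2|V|)$. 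The tail therefore contributes $O\bigl(R/\sqrt{n\log|V|}\bigr)$, which is dominated by $t_0 = O\bigl(R\sqrt{\log|V|/n}\bigr)$.

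I do not anticipate a real obstacle: the argument is a textbook expected-maximum-of-sub-Gaussians computation, and the only thing to watch is the bookkeeping of absolute constants. The argument naturally produces $\log(2|V|)$ rather than $\log|V|$, but the additive $\log 2$ is absorbed into $C$ as soon as $|V| \ge 2$, while the degenerate case $|V| = 1$ is trivial because then $h_P(\theta) = \langle \theta, v\rangle$ integrates to zero by rotational invariance of $\sigma$, matching the vanishing right-hand side.
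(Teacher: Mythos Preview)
Your proof is correct and follows essentially the same route as the paper: concentration on the sphere plus a union bound over $V$ to control the tail of $h_P$, followed by splitting the expectation at a threshold of order $R\sqrt{\log|V|/n}$. The only differences are packaging---the paper works with the one-sided maximum directly and splits the integral over $\{h_P\le t\}$ and its complement, bounding the latter crudely by $R\,|V|\,e^{-nt^2/(2R^2)}$ and then choosing $t=R\sqrt{4\log|V|/n}$, rather than going through the layer-cake formula and a Mills-ratio estimate---so the substance is identical.
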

\begin{proof}
We write $h_{P}(\theta) = \max_{v \in V} \langle v, \theta \rangle$. Let $B_t = \{\theta \in S^{n-1}: h_{P}(\theta) \leq t \}$. Using Lemma \ref{concentration of measure}  and a union bound again gives:
    \[ M^*(P) = \int_{S^{n-1}}\max_{v \in V} \langle v, \theta \rangle d\sigma = \int_{B_t}\max_{v \in V} \langle v, \theta \rangle d\sigma  + \int_{S^{n-1} \setminus B_t }\max_{v \in V} \langle v, \theta \rangle d\sigma \leq t + R|V|e^{-\frac{1}{2}n(\frac{t}{R})^2}.
    \] 
Choosing $t = R\sqrt{\frac{3\log|V|}{n}}$ and using the fact that $|V| > n $ gives
\[ M^{*}(P) \leq  R\left( \sqrt{\frac{3\log|V|}{n}} + \frac{1}{\sqrt{|V|}}\right) \leq \left(\sqrt{3} + o(1)\right) R \sqrt{\frac{\log|V|}{n}}, \]

\noindent where the $o(1)$ term has a decay rate of at most $\frac{1}{\sqrt{\log n}}$.

\end{proof}

We apply Lemma \ref{lemma: mean width bound} to the dual polytope $P^{\circ}$. We use the following facts: $M^*(P) = M(P^\circ)$, $R(P) = \frac{1}{r(P^{\circ})}$ where $r = r(P^\circ)$ is the maximal $r$ for which $rB_n \subset P^{\circ}$, and that the number of facets $\mathcal{F}$ of $P$ is the same as the number of vertices of $P^{\circ}$. We get:
\begin{equation}
 M(P) \leq \left(\sqrt{3} + o(1)\right)\frac{1}{r}\sqrt{\frac{\log|\mathcal{F}|}{n}}.    
\end{equation}

\begin{proof}[Proof of Theorem \ref{main theorem}]
Combining Lemma \ref{lemma: mean width bound} with (1) (note that if the condition $\log|V| < \frac{n}{3}$ of Lemma \ref{lemma: mean width bound} is not satisfied, there is nothing to prove), using the trivial bound $M(P)M^*(P) \geq 1$ and our assumption that $\frac{r}{R} \geq \frac{1}{\sqrt{n}}$ we finally get:

\[ \log|V| \cdot \log|\mathcal{F}| \geq \left(\frac{1}{9} + o(1)\right) n^2 \left( \frac{r}{R} \right)^2 (M(P)M^*(P))^2 \geq \left(\frac{1}{9} + o(1)\right)n \]

\end{proof}

\bibliographystyle{amsplain}
%\addcontentsline{toc}{section}{\refname}\bibliography{GeomPL}

\end{document}